\tikzstyle arrowstyle=[scale=1]
\tikzstyle directed=[postaction={decorate,decoration={markings,mark=at position .65 with {\arrow[arrowstyle]{stealth}}}}]
\newtheorem{theorem}{Theorem}[section]
\newtheorem{lemma}[theorem]{Lemma}
\newtheorem{remark}[theorem]{Remark}
\newtheorem{example}[theorem]{Example}
\newtheorem{definition}[theorem]{Definition}
\def\ZZ{{\mathbb{Z}}}
\def\CP{{\mathcal P}}
\def\CL{{\mathcal L}}
\def\CT{{\mathcal T}}
\def\CF{{\mathcal F}}
\newtheorem{question}[theorem]{Open Question}
\begin{document}

\begin{center}
\uppercase{\bf Cyclic Critical Groups of Graphs}

\vskip .25in

{\bf Ryan Becker}\\
{\small Department of Mathematics, Colorado State University, Fort Collins, CO}\\
{\tt becker@math.colostate.edu}\\

\vskip .1in

{\bf Darren B Glass}\\
{\small Department of Mathematics, Gettysburg College, Gettysburg PA 17325}\\
{\tt dglass@gettysburg.edu}\\

\end{center}

\begin{abstract}
In this note, we describe a construction that leads to families of graphs whose critical groups are cyclic.  For some of these families we are able to give a formula for the number of spanning trees of the graph, which then determines the group exactly.  We also pose several open questions related to this work.
\end{abstract}

\section{Introduction and Background}

This article will discuss some results related to a solitaire chip-firing game played on the vertices of a finite connected graph $G$. In order to describe this game, let us first define a {\em configuration} on the graph $G$ to be an assignment of an integer number of chips to each vertex of $G$. These numbers can be positive, negative, or zero, and if we denote a configuration by $\delta$ then $\delta(v)$ will be the number of chips assigned to the vertex $v$.  Given a configuration, we define its {\em degree} to be the total number of chips assigned.

We next define the legal transitions between configurations, by letting a {\em move} consist of choosing a vertex and either borrowing one chip from each adjacent vertex or firing one chip to each adjacent vertex. See Figure \ref{F:move} for one example. We note that borrowing at a vertex is equivalent to firing at all other vertices simultaneously and vice versa; we allow both as a matter of convenience. We will say that two configurations are {\em equivalent} if one can get from one to the other through a sequence of these moves. It is clear that a necessary but not sufficient condition for two configurations to be equivalent is that they have the same degree.

\begin{figure}[h]
\centering
\begin{subfigure}[b]{.4\textwidth}
\begin{tikzpicture}
[scale = 2, every node/.style={circle,minimum size=0.8cm,fill=blue!20}]
\node (v1) at (0,0) {0};
\node (v2) at (1,0) {4};
\node (v3) at (2,-0.5) {-1};
\node (v4) at (2,0.5) {-1};
\draw[directed] (v2) -- (v1);
\draw[directed] (v2) -- (v3);
\draw[directed] (v2) -- (v4);
\draw (v3) -- (v4);
\end{tikzpicture}
\caption{Before firing the center vertex}
\end{subfigure} \qquad
\begin{subfigure}[b]{.4\textwidth}
\begin{tikzpicture}
[scale = 2, every node/.style={circle,minimum size=0.8cm,fill=blue!20}]
\node (v1) at (0,0) {1};
\node (v2) at (1,0) {1};
\node (v3) at (2,-0.5) {0};
\node (v4) at (2,0.5) {0};
\draw (v1) -- (v2);
\draw (v2) -- (v3);
\draw (v2) -- (v4);
\draw (v3) -- (v4);
\end{tikzpicture}
\caption{After firing the center vertex}
\end{subfigure}
\caption{Configurations on a graph before and after firing the center vertex}
\label{F:move}
\end{figure}
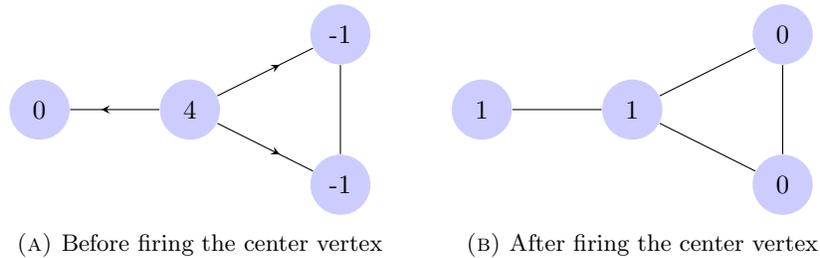

This setup may appear to be purely combinatorial in nature but it has a number of interesting applications in areas such as statistical physics, cryptography, algebraic geometry, and economics.  We define the {\em critical group} of $G$ to be the set of equivalence classes of configurations with degree zero. This set is naturally endowed with an abelian group structure where the group operation is addition of chips at corresponding vertices.  We will denote this group by $K(G)$.  Due to analogies with the set of divisors on an algebraic curve up to linear equivalence, this group is also known as the Jacobian of the graph $G$.  For more details on these connections to algebraic geometry, we refer the reader to \cite{BN1}.

It is well-known that for a given graph on $n$ vertices, one can compute its critical group by noting that the set of configurations of degree zero is isomorphic to $\ZZ^{n-1}$.  The critical group of $G$ is then isomorphic to $\ZZ^{n-1}/Im(\CL^*)$, where $\CL^*$ is the reduced Laplacian matrix of the graph $G$ (see \cite{Biggs},  \cite{Merino}, \cite{Primer} for details).  As discussed in \cite{Lor1}, one can compute the group structure of this quotient by computing the Smith Normal Form of the matrix $\CL^*$.  While efficient algorithms to do this are known (see, for example, \cite{EGV}) they often do not take into account the combinatorial structure of the graph.  Several recent papers including \cite{BMMPR}, \cite{CR}, \cite{GM}, and \cite{RT} attempt to use this structure in order to gain some insight into critical groups.  Some of these results use the fact that the order of the critical group of a graph is equal to the number of spanning trees of that graph, which is a corollary of Kirchhoff's Matrix Tree Theorem.  One result that is well known (see, for example, \cite[Prop 1.2]{CR}) and which we will use repeatedly is the following:

\begin{lemma}\label{L:sum}
Let $G_1$ and $G_2$ be two graphs and let $H$ be the graph obtained by identifying a single vertex of $G_1$ with a single vertex of $G_2$.  Then the critical group of $H$ is isomorphic to the direct sum of the critical groups of $G_1$ and $G_2$.
\end{lemma}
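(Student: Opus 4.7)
The plan is to prove this by working with the reduced Laplacian presentation of the critical group, as described in the paragraph just before the lemma, namely $K(G) \cong \ZZ^{n-1}/\mathrm{Im}(\CL^*)$ where $\CL^*$ is obtained from the Laplacian by deleting the row and column of a chosen sink vertex.

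Let $v$ denote the vertex at which $G_1$ and $G_2$ are glued to form $H$, and let $n_1, n_2$ be the number of vertices of $G_1, G_2$ respectively, so that $H$ has $n_1+n_2-1$ vertices. I would choose $v$ as the sink for computing $K(H)$, and also as the sink for computing $K(G_1)$ and $K(G_2)$. The key observation is that no vertex of $G_1\setminus\{v\}$ is adjacent in $H$ to any vertex of $G_2\setminus\{v\}$, since the only shared vertex of $G_1$ and $G_2$ inside $H$ is $v$ itself. Consequently, if we order the vertices of $H$ by listing those of $G_1\setminus\{v\}$ first and those of $G_2\setminus\{v\}$ second (with $v$ itself at the end, then deleted), the reduced Laplacian of $H$ takes the block-diagonal form
\[
\CL^*_H \;=\; \begin{pmatrix} \CL^*_{G_1} & 0 \\ 0 & \CL^*_{G_2} \end{pmatrix}.
\]
The entries off the diagonal blocks are zero by the non-adjacency observation, and the diagonal blocks are exactly the reduced Laplacians of $G_1$ and $G_2$ because for a vertex $w\neq v$ in $G_1$, the degree of $w$ in $H$ equals the degree of $w$ in $G_1$ (and similarly for $G_2$), while its adjacencies are unchanged.

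From this block decomposition, the quotient splits as
\[
\ZZ^{n_1+n_2-2}/\mathrm{Im}(\CL^*_H) \;\cong\; \bigl(\ZZ^{n_1-1}/\mathrm{Im}(\CL^*_{G_1})\bigr)\oplus\bigl(\ZZ^{n_2-1}/\mathrm{Im}(\CL^*_{G_2})\bigr),
\]
which is precisely $K(G_1)\oplus K(G_2)$, giving the desired isomorphism.

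The only genuine step requiring care is the block-diagonal claim, and this is essentially automatic once $v$ is chosen as sink, since the Laplacian entry $\CL_H(x,y)$ for distinct $x,y$ is $-1$ or $0$ according to whether $xy$ is an edge, and no such edge crosses between $G_1\setminus\{v\}$ and $G_2\setminus\{v\}$. No other obstacle arises; this is a short verification rather than a hard proof.
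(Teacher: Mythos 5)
Your proof is correct, and it is the standard argument for this fact. Note that the paper itself gives no proof of Lemma \ref{L:sum} at all --- it simply cites it as well known, pointing to Proposition 1.2 of Cori and Rossin \cite{CR} --- so there is no in-paper argument to compare against; your block-diagonal reduced-Laplacian computation with the identified vertex chosen as the common sink is exactly the expected verification. The one small thing to tidy is your final remark that $\CL_H(x,y)$ is ``$-1$ or $0$ according to whether $xy$ is an edge'': the paper's constructions produce multigraphs (e.g.\ the $(2,2,4,2,2)$ stack), so the entry should be the negative of the number of edges between $x$ and $y$; this changes nothing in your argument, since the block-diagonality only needs that no edge joins $G_1\setminus\{v\}$ to $G_2\setminus\{v\}$.
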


Given a graph $G$, it is natural to ask what the minimal number of elements needed to generate the critical group of $G$ is.  The extreme cases are handled by letting $G$ be a tree, in which case the critical group is trivial, and letting $G$ be the complete graph $K_n$, in which case the critical group is $(\ZZ/n\ZZ)^{n-2}$.  We also note that for any finite abelian group $\Gamma \cong \ZZ/m_1\ZZ \oplus \ldots \oplus \ZZ/m_r\ZZ$ it is possible to construct a graph $G$ whose critical group is $\Gamma$ by starting with $k$ cycles of length $m_1,\ldots , m_k$ and identifying a single vertex on each of the cycles by Lemma \ref{L:sum}.  While this construction shows that the rank of the critical group of a graph can be arbitrarily large, Wagner conjectured in \cite[Conj 4.2]{Wag} that the probability that a suitably defined random graph has a cyclic critical group approaches one. While this conjecture has recently been shown to be false, and Wood shows in \cite[Cor 9.5]{Wood} that the probability that a random graph has cyclic critical group is less than $0.8$, there is still significant evidence that most random graphs have cyclic critical groups.  In this note we will construct large families of graphs for which the critical group will be cyclic and in some cases we will be able to compute the order of this cyclic group.

\section{Adding Chains To Graphs}

Given a graph $G$ and two vertices $x,y \in V(G)$ we define $\delta_{x,y}$ to be the configuration on $G$ so that $\delta_{x,y}(x)=1, \delta_{x,y}(y)=-1$ and $\delta_{x,y}(v)=0$ for $v \ne x,y$.  We note that $\delta_{x,y} = -\delta_{y,x}$, and in particular the two divisors will generate the same subgroup of $K(G)$.

\begin{definition}
A {\it generating pair} of vertices for a graph $G$ is a pair $\{x,y\} \subset V(G)$ so that the configuration $\delta_{x,y}$ is a generator of the critical group of $G$.  Equivalently, $\{x,y\}$ will be a generating pair if any configuration of degree zero is equivalent to a configuration which has value zero except possibly at $x$ and $y$.
\end{definition}

\begin{example}
Let $G$ be an $n$-cycle.  More explicitly, let $G$ be a graph with $V(G)=\{x_1,\ldots,x_n\}$ and an edge between $x_i$ and $x_j$ if and only if $i \equiv j \pm 1$ mod $n$.  Let $\delta$ be any configuration of total degree $0$ on $G$.  We claim that $\delta$ is equivalent to a multiple of $\delta_{x_{n-1},x_n}$.

To see this, we let $\delta_1$ be the configuration obtained from $\delta$ by borrowing $\delta(x_1)$ times at the vertex $x_2$.  In particular, $\delta_1$ will be the configuration defined by setting $\delta_1(x_1)=0, \delta_1(x_2)=\delta(x_2)+2\delta(x_1), \delta_1(x_3)=\delta(x_3)-\delta(x_1)$, and $\delta_1(x_i)=\delta(x_i)$ for all $i \ge 4$.  For each $2 \le k \le n-2$ we define $\delta_k$ inductively as the configuration obtained from $\delta_{k-1}$ by borrowing $\delta_{k-1}(x_k)$ times at $x_{k+1}$.

We note that the configuration $\delta_{n-2}$ is equivalent to $\delta$ and $\delta_{n-2}(x_i)=0$ except possibly at $i=n-1,n$.  This verifies our claim and in particular proves that $\{x_{n-1},x_n\}$ is a generating pair for $G$.  More generally, one can show that the pair $\{x_i,x_j\}$ is a generating pair if and only if $gcd(i-j,n)=1$.
\end{example}

It is not always the case that a generating pair consists of two adjacent vertices.  For example, if $G$ is the graph in Figure \ref{F:tripent} it follows from Lemma \ref{L:sum} $K(G) \cong \ZZ/15\ZZ$ but that $\delta_{x,y}$ will either have order three or five for any pair of adjacent vertices.  However, for the vertices labelled $a$ and $b$ one can see that $\delta_{a,b}$ will generate the full group.

\begin{figure}[h]
\centering
\begin{subfigure}[b]{.35\textwidth}
\centering
\begin{tikzpicture}
[scale = 1.2, every node/.style={circle,minimum size=0.5cm,fill=blue!20}]
\node (v1) at (.5,0) {};
\node (v2) at (1.5,0) {$b$};
\node (v3) at (2,1) {};
\node (v4) at (0,1) {};
\node (v5) at (1,2) {};
\node (v6) at (0,3) {$a$};
\node (v7) at (2,3) {};
\draw (v1) -- (v2);
\draw (v3) -- (v2);
\draw (v4) -- (v1);
\draw (v3) -- (v5);
\draw (v4) -- (v5);
\draw (v6) -- (v7);
\draw (v5) -- (v6);
\draw (v5) -- (v7);
\end{tikzpicture}
\caption{A graph with cyclic critical group and no adjacent generating pairs}
\label{F:tripent}
\end{subfigure}
\qquad
\begin{subfigure}[b]{.35\textwidth}
\centering
\begin{tikzpicture}
[scale = 1.2, every node/.style={circle,minimum size=0.5cm,fill=blue!20}]
\node (v1) at (.5,0) {};
\node (v2) at (1.5,0) {};
\node (v3) at (2,1) {};
\node (v4) at (0,1) {};
\node (v5) at (1,2) {z};
\node (v6) at (.25,3) {};
\node (v7) at (-.5,2) {};
\node (w1) at (1,3){};
\node (w2) at  (1.5,3.5){};
\node (w3) at (2.2,3.8){};
\node (w4) at (2.8,3.2){};
\node (w5) at (2.5,2.5){};
\node (w6) at (2,2){};
\draw (v1) -- (v2);
\draw (v3) -- (v2);
\draw (v4) -- (v1);
\draw (v3) -- (v5);
\draw (v4) -- (v5);
\draw (v6) -- (v7);
\draw (v5) -- (v6);
\draw (v5) -- (v7);
\draw (w1)--(w2);
\draw (w2)--(w3);
\draw (w3)--(w4);
\draw (w4)--(w5);
\draw (w5)--(w6);
\draw (w6)--(v5);
\draw (w1)--(v5);
\end{tikzpicture}
\caption{A graph with cyclic critical group and no generating pairs}
\label{F:tripentsept}
\end{subfigure}
\caption{Examples}
\end{figure}
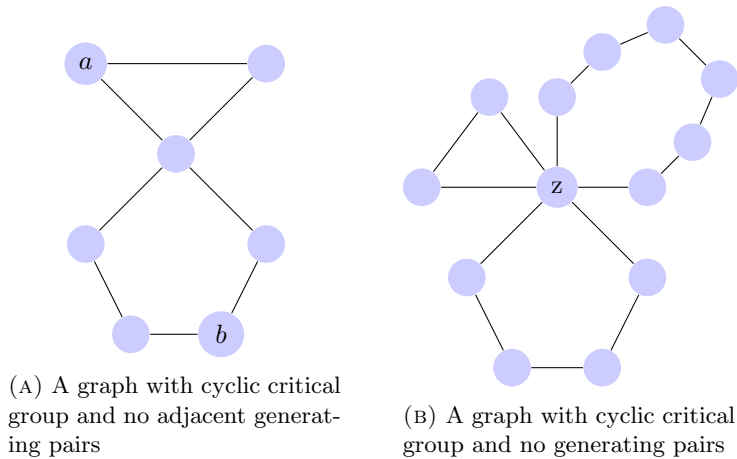

We note that even in a situation where a graph has a cyclic critical group then there does not need to be a generating pair.   The following example describes such a situation, answering a question posed by Lorenzini in \cite[Remark 2.11]{Lor3}.

\begin{example}
Let $G$ be the graph in Figure \ref{F:tripentsept}.  By Lemma \ref{L:sum},  $K(G) \cong \ZZ/105\ZZ$.  Moreover, if $z$ is the labelled vertex and $x \ne z$ is a different vertex on a cycle of size $d_x \in \{3,5,7\}$ then we note that the divisor $\delta_{x,z}$ has order $d_x$.  For any two vertices $x,y$ both of which are distinct from $z$, the divisor  $\delta_{x,y}$ can be written as $\delta_{x,z}-\delta_{y,z}$, and therefore has order equal to $lcm(d_x,d_y) \in \{3,5,7,15,21,35\}$ and in particular not equal to $|K(G)|$.
\end{example}

In the situation where our graph has a known generating pair, then we are able to construct a family of graphs which also have cyclic critical groups and known generating pairs due to the following theorem, which is the main result of this section.

\begin{theorem}\label{T:path}
Let $x$ and $y$ be a generating pair for $G$.  Let $\tilde{G}$ be the graph $G$ with an additional path of $\ell \ge 1$ edges (and $\ell-1$ new vertices) between the vertices $x$ and $y$.  Then any pair of consecutive vertices along this path are a generating pair for $\tilde{G}$. In particular, $K(\tilde{G})$ is cyclic.
\end{theorem}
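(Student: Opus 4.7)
Label the path vertices $x = v_0, v_1, \ldots, v_{\ell-1}, v_\ell = y$. The plan is first to show that the divisors $\delta_{v_i, v_{i+1}}$ all represent the same class in $K(\tilde{G})$, so it will suffice to verify that just one of them (I will use $\delta_{x, v_1}$) generates the critical group. Since each interior vertex $v_i$ ($1 \le i \le \ell-1$) has degree $2$ in $\tilde{G}$ with neighbors $v_{i-1}$ and $v_{i+1}$, firing at $v_i$ immediately yields the relation $\delta_{v_{i-1}, v_i} \equiv \delta_{v_i, v_{i+1}}$. Telescoping then delivers the two identities I will need later: $\delta_{x, y} \equiv \ell \cdot \delta_{x, v_1}$ and $\delta_{y, v_{\ell-1}} \equiv -\delta_{x, v_1}$.

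Next I would take an arbitrary degree-zero configuration $\delta$ on $\tilde{G}$ and clear the path interior. Iterating $j = \ell-1, \ell-2, \ldots, 2$, borrow at $v_{j-1}$ exactly $\delta(v_j)$ times so as to zero $v_j$; since $v_{j-1}$ has only $v_{j-2}$ and $v_j$ as neighbors, this never disturbs a previously cleared vertex, and the resulting equivalent configuration $\delta'$ is supported on $V(G) \cup \{v_1\}$. Setting $c = \delta'(v_1)$, the adjusted configuration $\delta'' := \delta' + c \cdot \delta_{x, v_1}$ is supported on $V(G)$, still has degree zero, and satisfies $\delta' = \delta'' - c \cdot \delta_{x, v_1}$.

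The hypothesis on $G$ provides integers $m$ and $(a_v)_{v \in V(G)}$ with $\delta''|_G + \sum_v a_v L_v^{(G)} = m \cdot \delta_{x, y}$. Performing those same borrowings in $\tilde{G}$ has the same effect except at $x$ and $y$, whose $\tilde{G}$-degrees exceed their $G$-degrees by $1$; concretely, $L_x^{(\tilde{G})} - L_x^{(G)} = \delta_{x, v_1}$ and $L_y^{(\tilde{G})} - L_y^{(G)} = \delta_{y, v_{\ell-1}}$ (when $L_v^{(G)}$ is extended by zero on the path). Hence in $K(\tilde{G})$, $\delta'' \equiv m \cdot \delta_{x, y} + a_x \delta_{x, v_1} + a_y \delta_{y, v_{\ell-1}}$, which by the first-paragraph identities collapses to $(m\ell + a_x - a_y) \cdot \delta_{x, v_1}$. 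Combined with $\delta \equiv \delta' = \delta'' - c \cdot \delta_{x, v_1}$, this shows $\delta$ is an integer multiple of $\delta_{x, v_1}$, proving that $\{x, v_1\}$, and hence every other consecutive pair, is a generating pair.

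The step I expect to require the most care is the Laplacian-correction bookkeeping in the last paragraph: making precise how $G$-firing sequences lift into $\tilde{G}$ and tracking the extra chips deposited at $v_1$ and $v_{\ell-1}$. Once that lift is handled cleanly, the telescoping relations from the first paragraph absorb every correction into a single multiple of $\delta_{x, v_1}$.
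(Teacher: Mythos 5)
Your proof is correct and follows essentially the same strategy as the paper's: use the generating-pair hypothesis on $G$ to move an arbitrary degree-zero configuration onto the added chain, then collapse the chain onto a single adjacent pair. The paper performs these two reductions in the opposite order and phrases everything as explicit chip-firing move sequences rather than Laplacian-lattice identities with correction terms, but the content is the same; your up-front observation that all the divisors $\delta_{v_i,v_{i+1}}$ are equivalent is a clean way to obtain the ``any consecutive pair'' claim, which the paper instead gets by re-running the chain consolidation for each choice of $i$.
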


\begin{proof}
Let $G$ be a graph and $\{x,y\}$ be a generating pair for $G$.  In particular, this means that for any configuration $\delta$ on $G$ we can do a series of moves so that the resulting configuration has chips only on $x$ and $y$.

Let $\tilde{G}$ be the graph with an additional path of length $\ell$ between vertices $x$ and $y$.  To be precise, $V(\tilde{G}) = V(G) \cup \{x_1,\ldots,x_{\ell-1}\}$ and the edges of $\tilde{G}$ will be the edges of $G$ along with edges connecting $x_i$ and $x_{i+1}$ for $1 \le i \le \ell-2$ as well as edges connecting $x$ to $x_1$ and $x_{\ell-1}$ to $y$. By convention, we set $x_0=x$ and $x_\ell=y$.

Given a configuration $\tilde{\delta}$ on $\tilde{G}$ we can consider its restriction $\tilde{\delta}|_G$ as a configuration (not necessarily of degree zero) on $G$.  We know there exists a sequence of legal moves that will make this configuration have chips only on the two vertices $x$ and $y$.  We perform this sequence of moves on $\tilde{\delta}$ and denote the resulting configuration on $\tilde{G}$ by $\delta_0$.

We have now moved all of the chips in the configuration onto the chain connecting $x$ and $y$, and we can therefore consolidate these on any two adjacent vertices.  To be explicit, choose two adjacent vertices $x_i$ and $x_{i+1}$.  If $i \ge 1$ then for each $1 \le j \le i$ we let $\delta_j$ be the configuration obtained by borrowing $\delta_{j-1}(x_{j-1})$ times at the vertex $x_j$.  In particular, the configuration $\delta_i$ will only have a nonzero value for vertices in $\{x_i,\ldots,x_\ell\}$.

We continue by defining $\delta_j$ for $j > i$.  In particular, for each $i<j \le  \ell-1$ we let $\delta_j$ be the configuration obtained by borrowing $\delta_{j-1}(x_{\ell-j})$ times at the vertex $x_{\ell-j-1}$. At the end of this process, the resulting configuration $\delta_{\ell-1}$ will only have a nonzero number of chips on the vertices $x_i$ and $x_{i+1}$.  In particular, we have shown that every configuration on $\tilde{G}$ of degree zero is equivalent to a multiple of the divisor $\delta_{x_i,x_{i+1}}$ and therefore $\{x_i,x_{i+1}\}$ is a generating pair for $\tilde{G}$.
\end{proof}

We note that Theorem \ref{T:path} is also a consequence of results in \cite[Sect.2]{Krep}.  However, our proof is more elementary.

\begin{example}\label{Ex:house}
Let $G$ be the `house' graph as pictured in Figure \ref{F:house} with vertices as labelled.  Assume that $\delta$ is a configuration of total degree zero on $G$. The fact that a $3$-cycle has cyclic critical group and that any pair of adjacent vertices is a generating pair for the graph tells us that there is a sequence of moves that will lead to an equivalent divisor $\delta_1$ with $\delta_1(z)=0$.  In particular, we can let $\delta_1$ be the divisor obtained by borrowing $\delta(z)$ times at the vertex $x$.

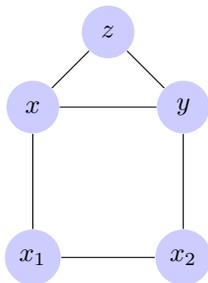
\begin{figure}[h]
\centering
\begin{tikzpicture}
[scale = 1, every node/.style={circle,minimum size=0.7cm,fill=blue!20}]
\node (v1) at (0,0) {$x_1$};
\node (v2) at (2,0) {$x_2$};
\node (v3) at (2,2) {$y$};
\node (v4) at (0,2) {$x$};
\node (v5) at (1,3) {$z$};
\draw (v1) -- (v2);
\draw (v3) -- (v2);
\draw (v3) -- (v4);
\draw (v4) -- (v1);
\draw (v3) -- (v5);
\draw (v4) -- (v5);
\end{tikzpicture}
\caption{The one-story house is one simple example of a stack of polygons.}
\label{F:house}
\end{figure}

If we now let $\gamma$ be the divisor obtained by borrowing $\delta_1(x)$ times at the vertex $x_1$ and $\delta_1(y)$ times at the vertex $x_2$, we can check that $\gamma(v)$ is only nonzero at $x_1,x_2$. In particular, $(x_1,x_2)$ is a generating pair for $G$.  In a similar manner, we could show that $(x,x_1)$ and $(x_2,y)$ are also generating pairs for $G$.
\end{example}

One can generalize the construction in Example \ref{Ex:house} to more general stacks of polygons.  In particular, let $(k_1,\ldots,k_n)$ be a sequence of integers with each $k_i \ge 2$.  Define the graph $G_1$ to be a $k_1$-cycle and, for each $1<i\le n$ define the graph $G_i$ by starting with graph $G_{i-1}$ and adding a path of $k_i-1$ edges between any two consecutive vertices of the path added at the previous step.  The resulting graph $G_n$ will consist of a stack of polygons with $k_1,\ldots,k_n$ sides.  One example is that the stack corresponding to $(3,4)$ or $(4,3)$ are isomorphic to the house graph in Example \ref{Ex:house}.  See Figure \ref{F:stacks} for additional examples.  It follows from inductive applications of Theorem \ref{T:path} that $K(G_n)$ is cyclic; we note that similar results are discussed in \cite{Lor2}.

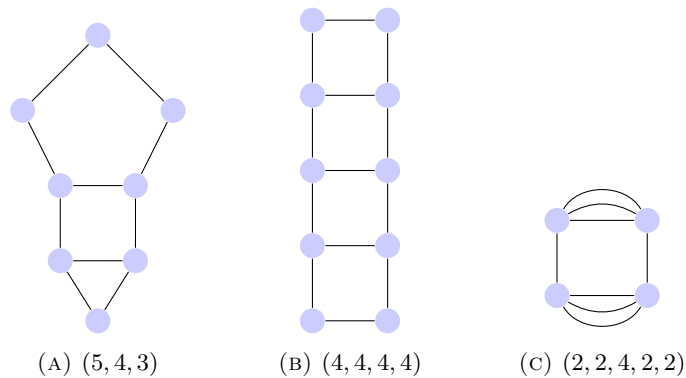
\begin{figure}[h]
\begin{subfigure}[b]{.2\textwidth}
\centering
\begin{tikzpicture}
[scale = 1, every node/.style={circle,minimum size=0.3cm,fill=blue!20}]
\node (v1) at (.5,0) {};
\node (v2) at (1.5,0) {};
\node (v3) at (2,1) {};
\node (v4) at (0,1) {};
\node (v5) at (1,2) {};
\node (v6) at (.5,-1) {};
\node (v7) at (1.5,-1) {};
\node (v8) at (1,-1.8) {};
\draw (v1) -- (v2);
\draw (v3) -- (v2);
\draw (v4) -- (v1);
\draw (v3) -- (v5);
\draw (v4) -- (v5);
\draw (v1) -- (v6);
\draw (v2) -- (v7);
\draw (v6) -- (v8);
\draw (v7) -- (v8);
\draw (v6) -- (v7);
\end{tikzpicture}
\caption{$(5,4,3)$}
\end{subfigure} \qquad
\begin{subfigure}[b]{.2\textwidth}
\centering
\begin{tikzpicture}
[scale = 1, every node/.style={circle,minimum size=0.3cm,fill=blue!20}]
\node (a1) at (0,0) {};
\node (a2) at (0,1) {};
\node (a3) at (0,2) {};
\node (a4) at (0,3) {};
\node (a5) at (0,4) {};
\node (b1) at (1,0) {};
\node (b2) at (1,1) {};
\node (b3) at (1,2) {};
\node (b4) at (1,3) {};
\node (b5) at (1,4) {};
\foreach \from/\to in {a1/a2,a2/a3,a3/a4,a4/a5,a1/b1,a2/b2,a3/b3,a4/b4,a5/b5,b1/b2,b2/b3,b3/b4,b4/b5}
    \draw (\from) -- (\to);
\end{tikzpicture}
\caption{$(4,4,4,4)$}
\end{subfigure}
\qquad
\begin{subfigure}[b]{.2\textwidth}
\centering
\begin{tikzpicture}
[scale = 1, every node/.style={circle,minimum size=0.3cm,fill=blue!20}]
\node (a1) at (0,2) {};
\node (a2) at (0,3) {};
\node (b1) at (1.2,2) {};
\node (b2) at (1.2,3) {};
\foreach \from/\to in {a1/a2,a1/b1,a2/b2,b1/b2}
    \draw (\from) -- (\to);
\draw (a2) to[bend left=30] (b2);
\draw (a2) to[bend left=60] (b2);
\draw (a1) to[bend right=30] (b1);
\draw (a1) to[bend right=60] (b1);
\end{tikzpicture}
\caption{$(2,2,4,2,2)$}
\end{subfigure}
\caption{Polygonal stacks corresponding to $(k_1,\ldots,k_n)$}
\label{F:stacks}
\end{figure}

We conclude this section by discussing some similarities between our result and results of Dino Lorenzini.  In particular, \cite[Thm 5.1]{Lor} gives the following result:

\begin{theorem}\label{T:Lor1}
Let $G$ be a connected graph with vertices $x,y$ so that there are $c>0$ edges between $x$ and $y$.  Moreover, let $G_1$ be the graph obtained by deleting all edges between the two vertices $x$ and $y$.  If $|K(G)|$ and $|K(G_1)|$ are relatively prime then $K(G)$ is cyclic.
\end{theorem}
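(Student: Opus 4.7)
The plan is to reduce cyclicity of $K(G)$ to a prime-by-prime statement and then exploit the fact that the reduced Laplacians of $G$ and $G_1$ differ by a rank-one matrix. Recall that a finite abelian group $A$ is cyclic if and only if $\dim_{\FF_p} A \otimes_\ZZ \FF_p \le 1$ for every prime $p$, so it suffices to establish this bound for $A = K(G)$.

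First I would pick any vertex $v_0 \ne x, y$; such a $v_0$ must exist because $G_1$ is assumed connected while having no edges between $x$ and $y$, which forces $|V(G)| \ge 3$. I would then form the reduced Laplacians $L^*(G)$ and $L^*(G_1)$ by deleting the row and column indexed by $v_0$, and observe that the $c$ parallel edges between $x$ and $y$ contribute $c(e_x - e_y)(e_x - e_y)^T$ to the Laplacian of $G$. Thus
\[
L^*(G) \;=\; L^*(G_1) \,+\, c\,(e_x - e_y)(e_x - e_y)^T,
\]
so the two reduced Laplacians differ by a symmetric rank-one perturbation.

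Next I would fix a prime $p$ dividing $|K(G)| = \det L^*(G)$ and show that the $p$-Sylow of $K(G)$ is cyclic. By hypothesis $p \nmid |K(G_1)| = \det L^*(G_1)$, so $L^*(G_1)$ has rank $n-1$ modulo $p$. Adding a matrix of rank at most one can lower the rank by at most one, so $L^*(G) \bmod p$ has rank at least $n-2$, and hence
\[
\dim_{\FF_p} K(G) \otimes \FF_p \;=\; \dim_{\FF_p}\operatorname{coker}\bigl(L^*(G) \bmod p\bigr) \;\le\; 1,
\]
which is exactly the assertion that the $p$-Sylow of $K(G)$ is cyclic. Since this holds for every prime $p$ dividing $|K(G)|$, $K(G)$ itself is cyclic.

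The whole argument is essentially an application of the rank inequality $\operatorname{rank}(A+B) \ge \operatorname{rank}(A) - \operatorname{rank}(B)$ together with the rank-one identification of the perturbation, so I do not anticipate any real obstacle. The one point that needs care is the choice of $v_0$ distinct from $x$ and $y$; this is legitimate precisely because the hypothesis that $|K(G_1)|$ be defined forces $G_1$, and so $G$, to have at least three vertices.
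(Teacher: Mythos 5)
Your proof is correct, and it is worth pointing out that the paper itself contains no proof of this statement: Theorem \ref{T:Lor1} is quoted from Lorenzini \cite[Thm 5.1]{Lor}, with a pointer to an alternate proof in \cite{Lor2}, so you have supplied an argument where the paper supplies only a citation. Every step of your argument is sound: the criterion that a finite abelian group $A$ is cyclic if and only if $\dim_{\FF_p}(A\otimes\FF_p)\le 1$ for every prime $p$; the identity $L^*(G)=L^*(G_1)+c\,(e_x-e_y)(e_x-e_y)^T$ for the reduced Laplacians obtained by deleting a vertex $v_0\ne x,y$; the right-exactness of $-\otimes\FF_p$ identifying $K(G)\otimes\FF_p$ with the cokernel of $L^*(G)\bmod p$; and the rank inequality showing that a rank-$\le 1$ perturbation of $L^*(G_1)$, which is invertible mod $p$ whenever $p$ divides $|K(G)|$ (since $\gcd(|K(G)|,|K(G_1)|)=1$ forces $p\nmid\det L^*(G_1)$), has corank at most $1$. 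You also correctly flag the one degenerate point, the existence of $v_0$: this is guaranteed because the hypothesis that $|K(G_1)|$ is a well-defined finite number forces $G_1$ to be connected and hence $|V(G)|\ge 3$, and in the excluded two-vertex case $K(G)\cong\ZZ/c\ZZ$ is cyclic anyway. Your rank-one-perturbation-mod-$p$ argument captures the linear-algebra content of Lorenzini's comparison of the two Laplacians (which in \cite{Lor2} is phrased via Smith normal forms), but in a more elementary and self-contained form than either cited source.
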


In \cite{Lor2}, he gives an alternate proof of this theorem and strengthens the result somewhat.  In particular, he is able to prove:

\begin{theorem}\label{T:Lor2}
Let $G$ be a connected graph with vertices $x,y$ connected by at least one edge so that $|K(G)|$ and $|K(G_1)|$ are relatively prime, where $G_1$ is as defined in the previous theorem.  Let $G'$ be the graph obtained from $G$ by adding a path of $\ell$ edges between $x$ and $y$, and let $G_1'$ be the graph obtained from $G'$ by deleting the single edge between any two adjacent vertices in the chain.  Then $|K(G_1)|$ and $|K(G_1')|$ are relatively prime.  In particular, it follows from Theorem \ref{T:Lor1} that $K(G')$ is cyclic.
\end{theorem}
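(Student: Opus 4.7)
The plan is to first show that $G_1'$ has the same critical group as $G$, and then derive both the coprimality statement and the application of Theorem~\ref{T:Lor1} as essentially formal consequences.

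Deleting a single chain edge $x_i x_{i+1}$ from the added path in $G'$ breaks the chain into two subpaths: one running from $x$ to $x_i$ and the other from $x_{i+1}$ to $y$. Each is a tree attached to $G$ at a single vertex (either $x$ or $y$), so two applications of Lemma~\ref{L:sum}, together with the fact that the critical group of any tree is trivial, give $K(G_1') \cong K(G)$, and in particular $|K(G_1')| = |K(G)|$. The hypothesis that $|K(G)|$ and $|K(G_1)|$ are coprime is therefore exactly the statement that $|K(G_1')|$ and $|K(G_1)|$ are coprime.

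For the ``in particular'' step, Theorem~\ref{T:Lor1} applied to $G'$ with the unique edge $x_i x_{i+1}$ between two adjacent chain vertices requires the coprimality of $|K(G')|$ and $|K(G_1')|$. I would establish this via two deletion--contraction identities. Induction on $\ell$, contracting one chain edge at a time, yields
\begin{equation*}
|K(G')| \;=\; \ell \cdot |K(G)| \;+\; |K(G/xy)|,
\end{equation*}
while iterating deletion--contraction on the $c$ parallel $xy$-edges of $G$ gives $|K(G)| = |K(G_1)| + c \cdot |K(G/xy)|$, since self-loops produced by contraction do not affect spanning-tree counts. The hypothesis then forces $\gcd(c \cdot |K(G/xy)|, |K(G)|) = 1$, hence $\gcd(|K(G/xy)|, |K(G)|) = 1$, and substituting into the first identity gives $\gcd(|K(G')|, |K(G)|) = 1 = \gcd(|K(G')|, |K(G_1')|)$, after which Theorem~\ref{T:Lor1} applies directly to $G'$.

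The only nontrivial computational step is the inductive identity for $|K(G')|$, which is a short deletion--contraction argument on the added chain; I expect this to be the main obstacle, though a routine one. The conceptual heart of the argument is the observation that $G_1'$ is $G$ with two pendant paths, which via Lemma~\ref{L:sum} collapses the coprimality question to the hypothesis itself.
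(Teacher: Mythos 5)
The paper offers no proof of this theorem: it is quoted from Lorenzini \cite{Lor2} (where it is obtained by Smith normal form and arithmetic-of-Laplacians techniques), so there is no in-paper argument to compare against. Your proof is correct and, usefully, elementary and self-contained given Theorem \ref{T:Lor1}. The decomposition of $G_1'$ as $G$ with two pendant paths, giving $K(G_1')\cong K(G)$ via Lemma \ref{L:sum}, is exactly right, and it correctly exposes that the displayed first conclusion ($\gcd(|K(G_1)|,|K(G_1')|)=1$) is just the hypothesis restated --- which also reveals that this conclusion alone does not feed Theorem \ref{T:Lor1}; what that theorem needs is $\gcd(|K(G')|,|K(G_1')|)=1$ (the statement as printed almost certainly intends $G'$ where it says $G_1$). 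You supply precisely that missing piece: both deletion--contraction identities check out ($\tau(G')=\ell\,\tau(G)+\tau(G/xy)$ by noting a spanning tree of $G'$ omits at most one chain edge, and $\tau(G)=\tau(G_1)+c\,\tau(G/xy)$ by peeling off the parallel $xy$-edges), and reducing modulo $\tau(G)$ gives $\gcd(\tau(G'),\tau(G))=\gcd(\tau(G/xy),\tau(G))=\gcd(\tau(G_1),\tau(G))=1$ as you claim. Two minor caveats: you should say explicitly that $G/xy$ means $G$ with $x$ and $y$ identified (loops discarded), so that $\tau(G/xy)$ counts the two-component spanning forests of $G$ separating $x$ from $y$; and the case $\ell=1$ is degenerate (the chain has no internal vertices, so the pair of ``adjacent chain vertices'' is $\{x,y\}$ itself and the single added edge is parallel to the existing ones), where your application of Theorem \ref{T:Lor1} to the pair $x_i,x_{i+1}$ does not literally parse --- but the theorem statement itself is ambiguous there, so this is not a real defect for $\ell\ge 2$.
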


We note the similarities between Theorem \ref{T:Lor2} and Theorem \ref{T:path}.  This leads us to pose the following question.

\begin{question}
Given a graph $G$ and a pair of vertices $x,y$ so that $|K(G)|$ and $|K(G_1)|$ are relatively prime, must it be the case that the configuration $\delta_{x,y}$ is a generator of $K(G)$?
\end{question}

\section{Recurrence Relations and Orders of Critical Groups}\label{S:count}

Given a string of integers $k_1,\ldots,k_n$ with all $k_i>1$, we define $G_n$ to be a stack of polygons given by the construction in the previous section.  Such a graph is not uniquely defined by the $n$-tuple, as we could stack the polygons along different edges and get different graphs.  However, we will see in this section that all such graphs will have the same critical group.  In particular, it follows from Theorem \ref{T:path} that $K(G_n)$ is a cyclic group.  Moreover, it is a consequence of the Matrix Tree Theorem (see \cite{BHN}) that the order of the critical group of any graph is equal to the number of spanning trees of the graph.  Therefore if we can count the number of spanning trees of $G_n$ then we will know the critical group up to isomorphism.

In order to count spanning trees on our polygonal graphs, we use the notion of a spanning forest, introduced in \cite{DM}. A {\em spanning forest} is a pair of disjoint trees on a graph with specified roots that together span all vertices in the graph.  For example, if we fix two adjacent vertices of a $4$-cycle then there are three spanning forests of this graph, as seen in Figure \ref{F:Forests}.

\begin{figure}[h]
\centering
\begin{tikzpicture}
[scale = 2, every node/.style={circle,minimum size=0.5cm,fill=blue!20}]
\node (v1) at (0,0) {$x$};
\node (v2) at (1,0) {$y$};
\node (v3) at (1,1) {};
\node (v4) at (0,1) {};
\draw (v1) -- (v4);
\draw (v3) -- (v4);
\draw[dashed] (v1) -- (v2);
\draw[dashed] (v3) -- (v2);
\node (v5) at (2,0) {$x$};
\node (v6) at (3,0) {$y$};
\node (v7) at (3,1) {};
\node (v8) at (2,1) {};
\draw[dashed] (v5) -- (v6);
\draw (v6) -- (v7);
\draw (v7) -- (v8);
\draw[dashed] (v8) -- (v5);
\node (v9) at (4,0) {$x$};
\node (v10) at (5,0) {$y$};
\node (v11) at (5,1) {};
\node (v12) at (4,1) {};
\draw (v9) -- (v12);
\draw[dashed] (v9) -- (v10);
\draw (v10) -- (v11);
\draw[dashed] (v11) -- (v12);
\end{tikzpicture}
\caption{The $3$ spanning forests on $C_4$ rooted at $x$ and $y$.}
\label{F:Forests}
\end{figure}
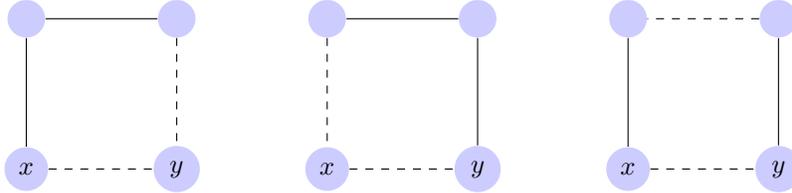

\begin{definition}
Let $G_n$ be the graph as described above, and for each $i=1,\dots,n-1$ let $e_i$ be the edge shared by the $k_i$-gon and the $k_{i+1}$-gon in the stack.
We define $T(k_1,\ldots,k_n)$ to be the number of spanning trees on the graph $G_n$ defined by the $n$-tuple $(k_1,\ldots,k_n)$ as above.  Additionally, it is easy to see that for any pair of consecutive vertices $x$ and $y$ on the $k_n$-gon ending the stack of polynomials other than the two vertices connected by $e_{n-1}$, the number of spanning forests on $G_n$ which are rooted at $x$ and $y$ will be the same as they must contain all edges of this polygon other than $e_{n-1}$ and the edge between $x$ and $y$; we denote the number of such forests by $F(k_1,\ldots,k_n)$.
\end{definition}

For example, if one considers the `house' graph from Figure \ref{F:house}, one can compute that there are eleven spanning trees so that $T(3,4) = 11$.  Moreover, there are eight spanning forests rooted at the vertices $x_1$ and $x_2$, so $F(3,4) = 8$.

\begin{lemma}\label{L:two}
The functions $F$ and $T$ are related by the following recurrence relations.

\[T(k_1,\ldots, k_n) = (k_n-1)T(k_1,\ldots,k_{n-1}) + F(k_1,\ldots,k_{n-1})\]
\[F(k_1,\ldots, k_n) = (k_n-2)T(k_1,\ldots,k_{n-1}) + F(k_1,\ldots,k_{n-1})\]
\end{lemma}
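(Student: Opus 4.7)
The plan is to prove both recurrences by a parallel case analysis based on how a spanning tree (or spanning forest) of $G_n$ intersects the set $E^{\mathrm{new}}$ of the $k_n-1$ new edges added at step $n$. Together with the shared edge $e_{n-1}=uv$, these new edges form the last polygon $P_n$, and each of the $k_n-2$ internal polygon vertices has both incident edges in $E^{\mathrm{new}}$.

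For the recurrence on $T$, take a spanning tree $T$ of $G_n$. Since each internal polygon vertex requires an incident edge in $T$, and since $T\cap E(G_{n-1})$ must be acyclic, a straightforward edge count forces $|T\cap E^{\mathrm{new}}|\in\{k_n-2,\,k_n-1\}$. In the case $|T\cap E^{\mathrm{new}}|=k_n-1$ we have $e_{n-1}\notin T$, and $T\cap E(G_{n-1})$ has $|V(G_{n-1})|-2$ edges forming a spanning forest of $G_{n-1}$ in which $u$ and $v$ lie in distinct components (so the new path reconnects them without closing a cycle); this contributes $F(k_1,\ldots,k_{n-1})$. In the case $|T\cap E^{\mathrm{new}}|=k_n-2$, the missing new edge can be any of $k_n-1$ choices, and $T\cap E(G_{n-1})$ is an arbitrary spanning tree of $G_{n-1}$ (no cycle can form since the broken new path has no $u$-$v$ route), giving $(k_n-1)T(k_1,\ldots,k_{n-1})$. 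Summing yields the first recurrence.

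For the recurrence on $F$, let $\mathcal{F}$ be a spanning forest of $G_n$ rooted at consecutive polygon vertices $x,y$ with $xy\in E^{\mathrm{new}}$. The edge $xy$ cannot lie in $\mathcal{F}$, and since $|\mathcal{F}|=|V(G_n)|-2$ the analogous edge count forces $|\mathcal{F}\cap E^{\mathrm{new}}|\in\{k_n-3,\,k_n-2\}$. The case $k_n-2$ (only $xy$ is missing) mirrors the previous Case 1 and contributes $F(k_1,\ldots,k_{n-1})$. In the case $k_n-3$ the second missing new edge may be any of $k_n-2$ edges, and $\mathcal{F}\cap E(G_{n-1})$ is a spanning tree of $G_{n-1}$; for each choice, summing the sub-cases $e_{n-1}\in\mathcal{F}$ and $e_{n-1}\notin\mathcal{F}$ gives exactly $T(k_1,\ldots,k_{n-1})$ valid forests, since spanning trees of $G_{n-1}$ that contain $e_{n-1}$ are in bijection with spanning forests of $G_{n-1}$ rooted at $u,v$ and thus number $F(k_1,\ldots,k_{n-1})$, while those omitting $e_{n-1}$ number $T(k_1,\ldots,k_{n-1})-F(k_1,\ldots,k_{n-1})$. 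This contributes $(k_n-2)T(k_1,\ldots,k_{n-1})$ and yields the second recurrence.

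The main obstacle is the bookkeeping in the case $|\mathcal{F}\cap E^{\mathrm{new}}|=k_n-3$: for each position of the second missing edge (including positions adjacent to $xy$, where $x$ or $y$ becomes an isolated single-vertex tree), one must verify that both sub-cases $e_{n-1}\in\mathcal{F}$ and $e_{n-1}\notin\mathcal{F}$ give acyclic forests with $x,y$ in different components, and that together they account for all $T(k_1,\ldots,k_{n-1})$ spanning trees of $G_{n-1}$. A cleaner alternative would be to observe that $F(k_1,\ldots,k_n)=T(G_n^{x=y})$ and that identifying the adjacent polygon vertices $x,y$ contracts $P_n$ into a $(k_n-1)$-gon, so $G_n^{x=y}$ is the polygonal stack for $(k_1,\ldots,k_{n-1},k_n-1)$; the second recurrence then follows immediately from applying the first recurrence to this stack.
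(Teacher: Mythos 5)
Your argument is correct and takes essentially the same route as the paper: both recurrences come from classifying spanning trees (resp.\ rooted spanning forests) of $G_n$ according to whether they contain all, or all but one, of the $k_n-1$ newly added path edges, matching the two cases with rooted forests and spanning trees of $G_{n-1}$ respectively. Two small remarks: the sub-case split on $e_{n-1}\in\mathcal{F}$ versus $e_{n-1}\notin\mathcal{F}$ is unnecessary, since the validity of the extension never depends on whether the spanning tree of $G_{n-1}$ uses $e_{n-1}$; and your closing observation that the second recurrence follows from the first by identifying $x$ with $y$ (turning the $k_n$-gon into a $(k_n-1)$-gon) is a clean shortcut not present in the paper.
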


\begin{proof}

Let $G_{n-1}$ be a graph defined by the $(n-1)$-tuple $(k_1,\ldots,k_{n-1})$, and let $x$ and $y$ be two vertices on the $(n-1)^{st}$ level of the graph connected by an edge.  Let $\CP$ be a path of $k_n-1$ additional edges joining $x$ and $y$. We label the new vertices $x_2,\ldots,x_{k_n-1}$, letting $x_1=x$ and $x_{k_n}=y$ by convention.  Define the graph $G_n$ be the union $G_{n-1} \cup \CP$.

Let $\CT$ be a spanning tree on $G_n$.  If $\CT$ contains all of the edges in $\CP$ then the restriction $\CT|_{G_{n-1}}$ is a spanning forest on the graph $G_{n-1}$ rooted at the two vertices $x$ and $y$.  Similarly, given a spanning forest $\CF_{n-1}$ on $G_{n-1}$, we can see that $\CF_{n-1} \cup \CP$ will be a spanning tree on $G_n$.  On the other hand, let $\CT_{n-1}$ be a spanning tree on $G_{n-1}$.  Then $\CT_{n-1}$ can be extended to be a spanning tree on $G_n$ in $k_n-1$ ways, as we must leave off one of the $k_n-1$ edges on the path between $x$ and $y$ while including all of the other new edges.  This proves the first identity.

Similar reasoning allows us to arrive at the second recurrence.  Fixing a vertex $x_i \in \CP$, any forest on $G_{n-1}$ rooted at $x$ and $y$ can be extended to a forest on $G_n$ rooted at $x_i$ and $x_{i+1}$ by adding the path between $x$ and $x_i$ and the path between $y$ and $x_{i+1}$.  Meanwhile, a tree on $G_{n-1}$ can be extended to a spanning forest on $G_n$ rooted at $x_i$ and $x_{i+1}$ by adding all elements of $\CP$ except for the edge connecting $x_i$ and $x_{i+1}$ and one other edge.  In particular, there will be $k_n-2$ ways to extend it.  This implies the theorem.
\end{proof}

\begin{theorem}\label{T:recur}
With notation as above, we have:
\[T(k_1,\ldots ,k_n) = k_nT(k_1,\ldots ,k_{n-1}) - T(k_1,\ldots ,k_{n-2}).\]
\end{theorem}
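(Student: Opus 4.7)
The plan is to derive the three-term recurrence purely algebraically from the pair of recurrences in Lemma \ref{L:two}, eliminating the auxiliary function $F$. The key observation is that subtracting the second recurrence of Lemma \ref{L:two} from the first gives
\[
T(k_1,\ldots,k_n) - F(k_1,\ldots,k_n) = T(k_1,\ldots,k_{n-1}),
\]
so that in general $F(k_1,\ldots,k_m) = T(k_1,\ldots,k_m) - T(k_1,\ldots,k_{m-1})$. This lets us trade $F$-values for consecutive differences of $T$-values.

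Next I would apply this identity at $m = n-1$ to rewrite the $F$-term appearing in the first recurrence:
\[
F(k_1,\ldots,k_{n-1}) = T(k_1,\ldots,k_{n-1}) - T(k_1,\ldots,k_{n-2}).
\]
Substituting into $T(k_1,\ldots,k_n) = (k_n-1)T(k_1,\ldots,k_{n-1}) + F(k_1,\ldots,k_{n-1})$ and collecting like terms immediately yields $T(k_1,\ldots,k_n) = k_n T(k_1,\ldots,k_{n-1}) - T(k_1,\ldots,k_{n-2})$, which is the desired recurrence.

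There is essentially no obstacle here, since all the combinatorial work has already been done in Lemma \ref{L:two}; the argument is a two-line manipulation. The only point that warrants a brief comment is that the identity $F = T - T(k_1,\ldots,k_{n-2})$ requires $n-1 \ge 2$, so the recurrence should be stated for $n \ge 3$, with the small cases $n = 1, 2$ serving as the base (where $T(k_1) = k_1$ counts the spanning trees of a single $k_1$-cycle, and $T(k_1,k_2)$ is computed directly from Lemma \ref{L:two} together with an easy count of $F(k_1)$). With these base cases in place the recurrence determines $T(k_1,\ldots,k_n)$ for all $n$.
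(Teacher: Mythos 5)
Your proposal is correct and follows exactly the paper's own argument: subtract the two recurrences in Lemma \ref{L:two} to get $F(k_1,\ldots,k_m)=T(k_1,\ldots,k_m)-T(k_1,\ldots,k_{m-1})$, then substitute at $m=n-1$ into the first recurrence. Your added remark about the range of $n$ and the base cases is a reasonable (and harmless) clarification that the paper leaves implicit.
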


\begin{proof}
If we subtract the second recurrence relation in Lemma \ref{L:two} from the first, we see that $T(k_1,\ldots, k_n)-F(k_1,\ldots, k_n)=T(k_1,\ldots, k_{n-1})$ and in particular that $F(k_1,\ldots, k_n)=T(k_1,\ldots, k_n)-T(k_1,\ldots, k_{n-1})$ for all $n$.  This implies that $F(k_1,\ldots k_{n-1})=T(k_1,\ldots , k_{n-1})-T(k_1,\ldots, k_{n-2})$.  Plugging this into the first recurrence relation gives the desired result.
\end{proof}

\begin{remark}
The readers may find it strange, as the authors did at first, that the sequence $T_n$ is given by a recurrence relation that depends on the previous two terms and the length of the most recently added chain, but not on the length of the chain before it.  We note that this follows more directly from an alternative purely combinatorial proof of Theorem \ref{T:recur} that we will now sketch.

In particular, for any spanning tree $\CT_{n-1}$ on $G_{n-1}$ one can see that there are $k_{n-1}$ ways to extend it to a spanning tree on $G_n$ merely by including all but one of the new edges.  Moreover, there is a unique way to restrict it to a tree on $G_{n-2}$, by removing any portion of the tree that is contained in the $(n-1)^{st}$ polygon and adding an additional edge if the resulting set is not connected.  One can show that this gives a $k_n$-to-$1$ map from the set of trees on $G_{n-1}$ to the union of the sets of trees on $G_n$ and $G_{n-2}$, implying the theorem.
\end{remark}

\begin{example}
Let us consider the case where we have a stack of $k$-gons with $k \ge 3$, and let $T_n$ be the number of spanning trees of such a graph so that the critical group of this graph is isomorphic to $\ZZ/T_n\ZZ$.  In particular, this will be the case where $k_n$ is the constant value $k$ for all $n$, so Theorem \ref{T:recur} implies that the sequence $\{T_n\}$ satisfies the second order linear recurrence $T_n=kT_{n-1}-T_{n-2}$.  One can easily compute the initial conditions $T_0=1$ and $T_1=k$.  If one prefers an explicit formula to a recursive one, it is then possible to use well-known results on recurrence relations (see, for example, \cite[Ch. 6]{RobTes}) to compute that \[T_n=\frac{1}{2}\left[\left(1+ \frac{k}{\sqrt{k^2-4}}\right)\left(\frac{k+\sqrt{k^2-4}}{2}\right)^n + \left(1- \frac{k}{\sqrt{k^2-4}}\right)\left(\frac{k-\sqrt{k^2-4}}{2}\right)^n\right]\]

It is worth noting that when $k=4$, the graph $G_n$ is the $2$-by-$n$ grid and the number of spanning trees is computed in \cite{DM} using similar techniques to ours.
\end{example}

\begin{example}
Next, consider the example of an $n$-story `house', corresponding to the $(n+1)$-tuple $(3,4,\ldots,4)$.  As in the previous example, the number of trees will satisfy the recurrence relation $T_n = 4T_{n-1} - T_{n-2}$. One can compute by hand in this case that $T_0=3$ and $T_1 = 11$.  In particular, this shows that \[T_n=\frac{1}{2\sqrt{3}}\left[\left(3\sqrt{3}+5\right)\left(2+\sqrt{3}\right)^n+\left(3\sqrt{3}-5\right)\left(2-\sqrt{3}\right)^n\right]\]
\end{example}

\begin{example}
For our final example, we consider the case of a stack of alternating $k_1$-gons and $k_2$-gons, where $a$ and $b$ are both at least $2$.  Again, it follows from Theorem \ref{T:path} that the critical group is cyclic and therefore we only need to count the number of spanning trees to determine the group.  Let us assume that $A_n$ is the number of spanning trees of the graph formed by adding $n$ of each type of shape in an alternating fashion.  (We leave as an exercise to the reader the interesting fact that you get a different answer if you put a stack of $n$ $k_1$-gons on top of a stack of $n$ $k_2$-gons).  Moreover, let $B_n$ be the number of spanning trees of a graph composed with $n$ $k_1$-gons and $n-1$ $k_2$-gons arranged alternatingly.

In particular, it follows from Theorem \ref{T:recur} that we have $A_n=k_2B_n-A_{n-1}$ and $B_n=k_1A_{n-1}-B_{n-1}$.  From these two relations, one can deduce that $A_n=(k_1k_2-2)A_{n-1}-A_{n-2}$ and $B_n=(k_1k_2-2)B_{n-1}-B_{n-2}$.  Combined with the additional observations that $A_0=1$, $A_1=k_1k_2-1$, $B_0=0$, and $B_1=k_1$ one can use standard results on recurrence relations to get an explicit formula for the $A_n$ and $B_n$.
\end{example}

\providecommand{\bysame}{\leavevmode\hbox to3em{\hrulefill}\thinspace}
\providecommand{\MR}{\relax\ifhmode\unskip\space\fi MR }
\providecommand{\MRhref}[2]{%
  \href{http://www.ams.org/mathscinet-getitem?mr=#1}{#2}
}
\providecommand{\href}[2]{#2}

\end{document}